\documentclass[a4paper,11pt]{article}

\usepackage[english]{babel}
\usepackage[utf8]{inputenc}
\usepackage[T1]{fontenc}
\usepackage{amsmath,amsthm,amsfonts,amssymb}
\usepackage{geometry}
\usepackage{enumitem}

\geometry{margin=1in}

\newtheorem{theorem}{Theorem}[section]
\newtheorem{definition}[theorem]{Definition}
\newtheorem{lemma}[theorem]{Lemma}


\newcommand{\norm}[1]{\left\lVert#1\right\rVert}
\newcommand{\inner}[2]{\left\langle#1, #2\right\rangle}
\newcommand{\Aff}{\operatorname{Aff}}

\title{A Probabilistic Generalization of the Mazur-Ulam Theorem}
\author{Justinas Zaliaduonis \and Sergios Gatidis}
\date{\today}

\begin{document}

\maketitle

\begin{abstract}
The classical Mazur-Ulam theorem establishes that every surjective isometry between normed real vector spaces is an affine transformation. In various applied mathematical settings, however, one encounters maps that preserve distances not pointwise, but \emph{almost everywhere} with respect to a probability measure. This paper provides a rigorous generalization of the Mazur-Ulam theorem to probability spaces. We prove that if a measurable map on a subset of $\mathbb{R}^d$ preserves distances almost everywhere with respect to a measure with full-dimensional support, it coincides almost everywhere with a global Euclidean isometry, defined as an orthogonal transformation followed by a translation.
\end{abstract}

\section{Introduction}

The study of isometries is central to the geometry of Banach spaces. The foundational result in this area is the Mazur-Ulam theorem \cite{mazur1932}, which characterizes the rigidity of distance-preserving maps.

\begin{theorem}[Mazur-Ulam]
    \label{thm:mazur-ulam-classical}
    Let $V$ and $W$ be normed vector spaces over $\mathbb{R}$. If $f: V \to W$ is a surjective isometry, then $f$ is an affine map. That is, $f(x) = T(x) + b$ where $T$ is a linear map and $b \in W$ is a translation vector.
\end{theorem}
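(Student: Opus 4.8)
The plan is to follow the reflection-group argument of Mazur and Ulam, in the streamlined form due to V\"ais\"al\"a. First I would reduce to the homogeneous case: replacing $f$ by $x \mapsto f(x) - f(0)$ leaves the hypotheses intact, so it suffices to show that a surjective isometry $f \colon V \to W$ with $f(0) = 0$ is \emph{linear}. The reduction of linearity to a midpoint identity is then routine: it is enough to prove $f\bigl(\tfrac{1}{2}(a+b)\bigr) = \tfrac{1}{2}\bigl(f(a)+f(b)\bigr)$ for all $a,b \in V$. Indeed, taking $b = 0$ gives $f(a/2) = f(a)/2$; applying this with $a+b$ in place of $a$ and comparing with the midpoint identity yields additivity $f(a+b) = f(a) + f(b)$. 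An additive map is automatically $\mathbb{Q}$-homogeneous, and since an isometry is $1$-Lipschitz, hence continuous, density of $\mathbb{Q}$ in $\mathbb{R}$ upgrades this to $\mathbb{R}$-homogeneity, so $f$ is linear; undoing the translation gives $f(x) = T(x) + b$ with $T$ linear and $b = f(0)$.

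The real content is the midpoint identity, and the obstacle it must clear is that in a general normed space the set of \emph{metric midpoints} of $a$ and $b$ --- the points $z$ with $\norm{z - a} = \norm{z - b} = \tfrac{1}{2}\norm{a-b}$ --- need not reduce to the single point $\tfrac{1}{2}(a+b)$ (as in $\ell^\infty$), so one cannot simply say that $f$ carries metric midpoints to metric midpoints. Mazur and Ulam circumvented this by iterating a diameter-halving operation; I would instead use the point-reflection trick. Fix $a,b \in V$, set $z = \tfrac{1}{2}(a+b)$, and let $\psi_z(x) = a + b - x$ be the reflection through $z$: a surjective isometry, an involution, with $z$ as its unique fixed point. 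Let $G$ be the group of surjective isometries $g$ of $V$ fixing both $a$ and $b$. The key lemma is that every $g \in G$ also fixes $z$.

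To prove the lemma, put $\mu = \sup_{g \in G} \norm{g(z) - z}$; this is finite since $\norm{g(z) - z} \le \norm{g(z) - g(a)} + \norm{a - z} = 2\norm{a - z} = \norm{a-b}$. Because $\psi_z$ interchanges $a$ and $b$, one checks that $\psi_z\, g\, \psi_z \in G$ whenever $g \in G$, and hence $h := g^{-1}\psi_z\, g\, \psi_z \in G$. A direct computation, using $\psi_z(z) = z$ and $g^{-1}(z + v) = z$ with $v = g(z) - z$, gives $h(z) = g^{-1}(z - v)$, so that $\norm{h(z) - z} = \norm{g^{-1}(z - v) - g^{-1}(z + v)} = 2\norm{v} = 2\norm{g(z) - z}$. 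Thus $2\norm{g(z) - z} \le \mu$ for every $g \in G$, forcing $\mu \le \mu/2$, i.e. $\mu = 0$. To deduce the midpoint identity, set $z' = \tfrac{1}{2}(f(a) + f(b))$ and let $\psi_{z'}(y) = f(a) + f(b) - y$ be the reflection through $z'$ in $W$. Here surjectivity of $f$ is indispensable: it makes $f^{-1}$ an isometry, so $g := f^{-1} \circ \psi_{z'} \circ f \circ \psi_z$ is a bona fide surjective isometry of $V$. A short chase shows $g(a) = a$ and $g(b) = b$, so $g \in G$; the lemma gives $g(z) = z$, and unwinding with $\psi_z(z) = z$ yields $\psi_{z'}(f(z)) = f(z)$. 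Since $z'$ is the unique fixed point of $\psi_{z'}$, we conclude $f(z) = z'$, which is precisely the midpoint identity.

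I expect the one genuinely delicate point to be the bookkeeping in the lemma: verifying $\psi_z\, g\, \psi_z \in G$ (a two-line orbit check) and then the identity $\norm{h(z) - z} = 2\norm{g(z) - z}$, which is what makes the supremum collapse. Everything else --- the reduction to midpoints, the final diagram chase, and the additive-to-linear step --- is formal, with continuity of isometries the only analytic ingredient, entering solely to pass from $\mathbb{Q}$- to $\mathbb{R}$-homogeneity.
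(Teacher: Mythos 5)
Your argument is correct and complete: it is the standard V\"ais\"al\"a reflection-group proof of the Mazur--Ulam theorem, and every step checks out --- the reduction to the midpoint identity, the finiteness of $\mu = \sup_{g\in G}\norm{g(z)-z}$ via the triangle inequality through the fixed point $a$, the closure of $G$ under conjugation by $\psi_z$, the doubling identity $\norm{h(z)-z} = 2\norm{g(z)-z}$ (which correctly uses $z = g^{-1}(z+v)$), and the final conjugation $f^{-1}\circ\psi_{z'}\circ f\circ\psi_z$ whose membership in $G$ is exactly where surjectivity of $f$ enters. Note that the paper itself offers no proof of this statement: it is quoted as classical background with a citation to Mazur and Ulam, so there is nothing to compare against; your proposal simply supplies a valid self-contained proof of the cited result.
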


In the context of Euclidean spaces $\mathbb{R}^d$, this implies that any surjective distance-preserving map is a composition of an orthogonal linear transformation and a translation. \\

However, strict pointwise constraints are often too restrictive for measure-theoretic contexts. A natural question arises: if a map behaves like an isometry "for the most part", does it retain the rigid structure of a true isometry? \\

In this work, we address the relaxation of the pointwise isometry condition to an \emph{almost everywhere} condition with respect to a probability measure $\mu$. We seek to answer the following:

\begin{quote}
    \emph{Let $\mu$ be a probability measure on $\mathbb{R}^d$ with full-dimensional support. If a measurable map $h: \mathbb{R}^d \to \mathbb{R}^d$ preserves distances for $\mu \times \mu$-almost all pairs of points, does there exist a global affine isometry $H$ such that $h = H$ almost everywhere?}
\end{quote}

We answer this question in the affirmative. Our approach bridges affine geometry and measure theory. We first define the concept of an isometry almost everywhere. We then utilize the geometric properties of simplexes in $\mathbb{R}^d$ to construct a candidate global isometry and use Fubini's theorem to extend the equality to the full measure space.

\section{Preliminaries and Definitions}

We assume the ambient space is the Euclidean space $\mathbb{R}^d$ equipped with the standard inner product $\inner{\cdot}{\cdot}$ and norm $\norm{\cdot}$. Let $\mathcal{B}(\mathbb{R}^d)$ denote the Borel $\sigma$-algebra.

\subsection{Measure Theoretic Setup}

\begin{definition}[Full-Dimensional Support]
    Let $\mu$ be a probability measure on $(\mathbb{R}^d, \mathcal{B}(\mathbb{R}^d))$. The support of $\mu$, denoted $\operatorname{supp}(\mu)$, is the set of all points $x \in \mathbb{R}^d$ for which every open neighborhood of $x$ has positive measure. We say $\mu$ has \emph{full-dimensional support} if $\operatorname{supp}(\mu)$ is not contained in any proper affine hyperplane of $\mathbb{R}^d$.
\end{definition}

\begin{definition}[Isometry Almost Everywhere]
\label{def:isometry-almost-everywhere}
    Let $(Z, \mathcal{A}, \mu)$ be a probability space where $Z \subseteq \mathbb{R}^d$. A measurable map $h: Z \to \mathbb{R}^d$ is called an \emph{isometry almost everywhere} (or $\mu$-a.e. isometry) if there exists a set $D \in \mathcal{A} \otimes \mathcal{A}$ with $(\mu \times \mu)(D^c) = 0$ such that:
    \begin{equation}
        \norm{h(z) - h(\tilde{z})} = \norm{z - \tilde{z}} \quad \text{for all } (z, \tilde{z}) \in D.
    \end{equation}
\end{definition}

\subsection{Affine Geometry}

To prove the main result, we rely on the rigidity of finite geometric structures.

\begin{definition}[Affine Independence]
    A set of $k+1$ points $\{v_0, v_1, \dots, v_k\} \subset \mathbb{R}^d$ is \emph{affinely independent} if the vectors $\{v_1 - v_0, \dots, v_k - v_0\}$ are linearly independent.
\end{definition}

\begin{definition}[Affine Hull]
    For a subset $S \subseteq \mathbb{R}^d$, the affine hull is defined as:
    \[
    \Aff(S) = \left\{ \sum_{i=1}^k \alpha_i x_i \mid k \in \mathbb{N}, x_i \in S, \sum_{i=1}^k \alpha_i = 1 \right\}.
    \]
\end{definition}

A set $S$ has full dimension in $\mathbb{R}^d$ if $\Aff(S) = \mathbb{R}^d$. This requires $S$ to contain at least $d+1$ affinely independent points.

\section{Rigidity of Finite Isometries}

Before addressing the probabilistic case, we establish that an isometry defined on a sufficiently rich finite set uniquely determines a global affine isometry. This is a crucial step in "locking" the geometry of the map.

\begin{lemma}[Extension of Finite Isometry]
\label{lemma:finite_extension}
    Let $S = \{a_0, a_1, \dots, a_d\} \subset \mathbb{R}^d$ be a set of $d+1$ affinely independent points. Let $f: S \to \mathbb{R}^d$ be a map that preserves pairwise distances on $S$, i.e., $\norm{f(a_i) - f(a_j)} = \norm{a_i - a_j}$ for all $0 \leq i, j \leq d$.
    
    Then, there exists a unique global Euclidean isometry $H: \mathbb{R}^d \to \mathbb{R}^d$ of the form $H(x) = Qx + b$, with $Q \in O(d)$ (the orthogonal group) and $b \in \mathbb{R}^d$, such that $H|_{S} = f$.
\end{lemma}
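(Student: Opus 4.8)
The plan is to explicitly construct $Q$ and $b$ from the image points and then verify both that $H = Qx+b$ agrees with $f$ on $S$ and that no other Euclidean isometry can do so. Set $b_i := f(a_i)$ for $i = 0,\dots,d$; by hypothesis $\norm{b_i - b_j} = \norm{a_i - a_j}$ for all $i,j$. First I would reduce to the linear case by translating: consider the vectors $u_i := a_i - a_0$ and $w_i := b_i - b_0$ for $i = 1,\dots,d$. Since $S$ is affinely independent, $\{u_1,\dots,u_d\}$ is a basis of $\R^d$, so there is a unique linear map $Q$ with $Qu_i = w_i$ for all $i$; then $H(x) := Q(x - a_0) + b_0$ is the unique affine map sending $a_0 \mapsto b_0$ and $a_i \mapsto b_i$, hence the unique \emph{affine} map with $H|_S = f$. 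It remains only to show $Q \in O(d)$.

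The key step is the polarization identity: for any $i,j \in \{1,\dots,d\}$,
\[
\inner{u_i}{u_j} = \tfrac{1}{2}\bigl(\norm{u_i}^2 + \norm{u_j}^2 - \norm{u_i - u_j}^2\bigr) = \tfrac{1}{2}\bigl(\norm{a_i-a_0}^2 + \norm{a_j-a_0}^2 - \norm{a_i-a_j}^2\bigr).
\]
Applying the same identity to $w_i = b_i - b_0$ and using $\norm{b_k - b_\ell} = \norm{a_k - a_\ell}$ throughout gives $\inner{w_i}{w_j} = \inner{u_i}{u_j}$ for all $i,j$. Thus $Q$ maps a basis to a set of vectors with the identical Gram matrix, which forces $Q$ to preserve the inner product on all of $\R^d$: for arbitrary $x = \sum_i \alpha_i u_i$ and $y = \sum_j \beta_j u_j$ we get $\inner{Qx}{Qy} = \sum_{i,j}\alpha_i\beta_j\inner{w_i}{w_j} = \sum_{i,j}\alpha_i\beta_j\inner{u_i}{u_j} = \inner{x}{y}$. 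Hence $Q \in O(d)$, and in particular $Q$ is invertible so $H$ is a genuine Euclidean isometry.

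For uniqueness: any map of the form $\tilde H(x) = \tilde Q x + \tilde b$ with $\tilde H|_S = f$ is in particular an affine map agreeing with $f$ on the affinely independent set $S$ whose affine hull is all of $\R^d$; since an affine map on $\R^d$ is determined by its values on $d+1$ affinely independent points, $\tilde H = H$, and therefore $\tilde Q = Q$, $\tilde b = b$. I do not anticipate a serious obstacle here; the only point requiring a little care is confirming that the Gram-matrix agreement genuinely upgrades to orthogonality on the whole space, which is exactly the bilinear-extension computation sketched above, and that affine independence of $S$ is what makes the defining linear system for $Q$ both solvable and uniquely solvable.
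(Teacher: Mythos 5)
Your proposal is correct and follows essentially the same route as the paper's proof: translate by $a_0$ to reduce to the linear case, use the polarization identity to show the Gram matrices of $\{u_i\}$ and $\{w_i\}$ coincide, extend bilinearly to conclude $Q \in O(d)$, and derive uniqueness from the fact that an affine map is determined by its values on $d+1$ affinely independent points. The only cosmetic difference is that you verify preservation of inner products directly while the paper verifies preservation of norms; these are equivalent here.
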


\begin{proof}
    Define the translation vectors $v_i = a_i - a_0$ for $i = 1, \dots, d$. Since $S$ is affinely independent, $\{v_1, \dots, v_d\}$ forms a basis for $\mathbb{R}^d$.
    
    Similarly, define $w_i = f(a_i) - f(a_0)$. By the distance preserving property:
    \[ \norm{w_i} = \norm{f(a_i) - f(a_0)} = \norm{a_i - a_0} = \norm{v_i}. \]
    Furthermore, for any $i, j$:
    \[ \norm{w_i - w_j} = \norm{f(a_i) - f(a_j)} = \norm{a_i - a_j} = \norm{v_i - v_j}. \]
    
    Using the polarization identity $\inner{x}{y} = \frac{1}{2}(\norm{x}^2 + \norm{y}^2 - \norm{x-y}^2)$, we deduce that inner products are preserved:
    \begin{align*}
        \inner{w_i}{w_j} &= \frac{1}{2} (\norm{w_i}^2 + \norm{w_j}^2 - \norm{w_i - w_j}^2) \\
        &= \frac{1}{2} (\norm{v_i}^2 + \norm{v_j}^2 - \norm{v_i - v_j}^2) \\
        &= \inner{v_i}{v_j}.
    \end{align*}
    
    Let $V$ be the matrix with columns $v_i$ and $W$ be the matrix with columns $w_i$. The condition $\inner{v_i}{v_j} = \inner{w_i}{w_j}$ implies that the Gram matrices are identical: $V^T V = W^T W$. Since $v_i$ form a basis, $V$ is invertible, and we can define a linear map $Q$ such that $Q v_i = w_i$. Specifically, $Q = W V^{-1}$.
    
    To show $Q$ is orthogonal, consider any $x \in \mathbb{R}^d$. We can write $x = \sum c_i v_i$. Then:
    \[ \norm{Qx}^2 = \norm{\sum c_i w_i}^2 = \sum_{i,j} c_i c_j \inner{w_i}{w_j} = \sum_{i,j} c_i c_j \inner{v_i}{v_j} = \norm{\sum c_i v_i}^2 = \norm{x}^2. \]
    Since $Q$ is linear and preserves norms, $Q \in O(d)$.
    
    Finally, define the affine map $H(x) = Q(x - a_0) + f(a_0)$. By construction:
    \[ H(a_0) = Q(0) + f(a_0) = f(a_0), \]
    \[ H(a_i) = Q(v_i) + f(a_0) = w_i + f(a_0) = f(a_i) - f(a_0) + f(a_0) = f(a_i). \]
    
    Uniqueness follows from the fact that an affine map is uniquely determined by its action on a $d$-simplex (an affinely independent set of $d+1$ points).
\end{proof}

\section{Main Result: Probabilistic Mazur-Ulam Theorem}

We now state and prove the generalization of the Mazur-Ulam theorem for probability spaces.

\begin{theorem}[Probabilistic Mazur-Ulam]
    \label{thm:probabilistic_mazur_ulam}
    Let $\mu$ be a probability measure on $\mathbb{R}^d$ with full-dimensional support. Let $h: \mathbb{R}^d \to \mathbb{R}^d$ be a measurable function that is an isometry almost everywhere with respect to $\mu$. \\
    
    Then, there exists a global Euclidean isometry $H: \mathbb{R}^d \to \mathbb{R}^d$ (where $H(x) = Ax + b$ with $A^T A = I$ and $b \in \mathbb{R}^d$) such that:
    \[ h(x) = H(x) \quad \text{for } \mu\text{-almost all } x. \]
\end{theorem}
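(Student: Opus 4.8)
The plan is to leverage Lemma~\ref{lemma:finite_extension} by first locating a ``good'' $(d+1)$-tuple of points in $\operatorname{supp}(\mu)$ that is simultaneously affinely independent and has all its distances preserved by $h$, then propagating the resulting candidate isometry $H$ to $\mu$-almost every point via Fubini's theorem. Concretely, let $D \in \mathcal{A}\otimes\mathcal{A}$ be the full-measure set on which $h$ preserves distances. The key combinatorial observation is that the $(d+1)$-fold product measure $\mu^{\otimes(d+1)}$ assigns full measure to the set of tuples $(a_0,\dots,a_d)$ for which $(a_i,a_j)\in D$ for every pair $i,j$: indeed, the complement is contained in $\bigcup_{i<j}\pi_{ij}^{-1}(D^c)$, a finite union of sets each of $\mu^{\otimes(d+1)}$-measure zero by Fubini. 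Separately, because $\operatorname{supp}(\mu)$ is not contained in any affine hyperplane, the set of affinely \emph{dependent} $(d+1)$-tuples drawn from $\operatorname{supp}(\mu)$ has $\mu^{\otimes(d+1)}$-measure zero --- this requires an argument, sketched below. Intersecting, we obtain a tuple (in fact, a full-measure set of tuples) that is affinely independent with all pairwise distances preserved.

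Fix one such tuple $S=\{a_0,\dots,a_d\}$. Applying Lemma~\ref{lemma:finite_extension} to $f=h|_S$ yields a global Euclidean isometry $H(x)=Ax+b$ with $H|_S=h|_S$. It remains to show $h=H$ $\mu$-almost everywhere. For this, consider the set $G$ of $x\in\mathbb{R}^d$ such that $(x,a_i)\in D$ for all $i=0,\dots,d$; by Fubini (and the fact that each slice $\{x:(x,a_i)\notin D\}$ is $\mu$-null for $\mu$-a.e.\ choice of the $a_i$, which we already arranged) we have $\mu(G)=1$. For any $x\in G$, the point $h(x)$ satisfies $\norm{h(x)-h(a_i)}=\norm{x-a_i}=\norm{H(x)-H(a_i)}=\norm{H(x)-h(a_i)}$ for every $i$. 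Thus both $h(x)$ and $H(x)$ lie at prescribed distances from the $d+1$ affinely independent points $h(a_0),\dots,h(a_d)$ (these are affinely independent because $H$, being an invertible affine map, preserves affine independence). Since $d+1$ affinely independent points in $\mathbb{R}^d$ have a unique common ``multilateration'' solution --- the system of sphere equations, after subtracting one equation from the others, becomes a square invertible linear system --- we conclude $h(x)=H(x)$ for all $x\in G$, hence $\mu$-almost everywhere.

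The main obstacle is the claim that affinely dependent $(d+1)$-tuples form a $\mu^{\otimes(d+1)}$-null set. One clean way: affine dependence of $(a_0,\dots,a_d)$ is equivalent to vanishing of the determinant $\det[a_1-a_0,\dots,a_d-a_0]$, a polynomial $P$ in the $d(d+1)$ coordinates; the bad set is $\{P=0\}$. Proceed by induction on the number of ``free'' points: choose $a_0,\dots,a_{k-1}$ affinely independent (possible for $k\le d$ since otherwise $\operatorname{supp}(\mu)$ would lie in a hyperplane — more precisely, if $\mu^{\otimes k}$ gave full measure to affinely dependent $k$-tuples for some minimal $k\le d$, one shows $\operatorname{supp}(\mu)$ is contained in a $(k-2)$-dimensional affine subspace, contradicting full-dimensionality); then the set of $a_k$ completing them to an affinely dependent tuple is the affine hull $\Aff(\{a_0,\dots,a_{k-1}\})$, a proper affine subspace, which has $\mu$-measure zero precisely because $\operatorname{supp}(\mu)\not\subseteq$ any proper affine hyperplane and hence any proper affine subspace meets $\operatorname{supp}(\mu)$ in a set of measure zero (its complement in $\operatorname{supp}(\mu)$ is nonempty and open in $\operatorname{supp}(\mu)$). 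An application of Fubini over the coordinates then gives $\mu^{\otimes(d+1)}(\{P=0\})=0$. Care is needed to state correctly why a proper affine subspace is $\mu$-null: this uses that if $\mu(L)>0$ for an affine subspace $L$, then $\operatorname{supp}(\mu|_L)\subseteq L$ forces part of $\operatorname{supp}(\mu)$ into $L$, and a short argument upgrades this to $\operatorname{supp}(\mu)\subseteq L$ when combined with the minimality in the induction — alternatively, one sidesteps this by noting directly that full-dimensional support implies every hyperplane is $\mu$-null, since a hyperplane with positive measure would be contained in (indeed equal to) a component of the support's affine hull.
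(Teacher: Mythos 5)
Your overall architecture matches the paper's (find an affinely independent $(d+1)$-tuple with all pairwise distances preserved, invoke Lemma~\ref{lemma:finite_extension}, then propagate via the sections $D_{a_i}$ and the fact that distances to $d+1$ affinely independent points determine a point), and your final propagation step is correct. The gap is in your claim that the affinely dependent $(d+1)$-tuples form a $\mu^{\otimes(d+1)}$-null set, and in the supporting claim that full-dimensional support forces every proper affine subspace (or hyperplane) to be $\mu$-null. Both are false under the stated hypotheses. Take $\mu = \frac{1}{2}\delta_0 + \frac{1}{2}\nu$ with $\nu$ uniform on $[0,1]^d$: the support is $[0,1]^d$, which lies in no hyperplane, yet any hyperplane through the origin has $\mu$-measure at least $\frac{1}{2}$, and the event that all $d+1$ sampled points equal $0$ has $\mu^{\otimes(d+1)}$-probability $2^{-(d+1)}>0$, so affinely dependent tuples have positive product measure. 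Your justification (``its complement in $\operatorname{supp}(\mu)$ is nonempty and open'') only shows the complement of the subspace has positive measure, not that the subspace itself is null, and positive measure of a hyperplane $L$ does not force $\operatorname{supp}(\mu)\subseteq L$.

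The repair is small but changes which measure-theoretic fact you invoke. What is true is the weaker statement that no set of \emph{full} $\mu$-measure is contained in a proper affine subspace $L$: if $\mu(L)=1$ then, $L$ being closed, every point outside $L$ has a neighborhood of measure zero, so $\operatorname{supp}(\mu)\subseteq L$, contradicting full-dimensionality. This is exactly what the paper uses: it selects the $a_i$ one at a time, at stage $k$ choosing $a_{k+1}$ from the measure-one set $A_k=(\bigcap_{i\le k}D_{a_i})\cap X_0$, which by the above cannot be contained in $\Aff(\{a_0,\dots,a_k\})$. Alternatively, you can keep your product-space framing by replacing ``dependent tuples are null'' with ``independent tuples with all pairs in $D$ have \emph{positive} measure'': pick affinely independent $p_0,\dots,p_d\in\operatorname{supp}(\mu)$, take small balls $B_i$ around them so that every selection from $B_0\times\cdots\times B_d$ is affinely independent (affine independence is an open condition), note $\mu(B_i)>0$ by definition of support, and intersect this positive-measure product set with your full-measure set of distance-preserved tuples and with $X_0^{d+1}$ (the latter you also need, and should state explicitly, for your set $G$ to have full measure). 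Either route closes the gap; as written, the step fails.
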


\begin{proof}
    Let $D \subset \mathbb{R}^d \times \mathbb{R}^d$ be the set of full measure such that $(\mu \times \mu)(D^c)=0$ and the isometry property holds on $D$:
    \[ \norm{h(x) - h(y)} = \norm{x - y} \quad \forall (x,y) \in D. \] 
    
    \textbf{Step 1: Selecting a Basis.}
    We define the sections of $D$. For any $x \in \mathbb{R}^d$, let $D_x = \{ y \in \mathbb{R}^d \mid (x,y) \in D \}$. By Fubini's theorem, since $(\mu \times \mu)(D^c) = 0$, there exists a set $X_0 \subset \mathbb{R}^d$ with $\mu(X_0) = 1$ such that for all $x \in X_0$, $\mu(D_x) = 1$. \\
    
    Since $\mu$ has full-dimensional support, $X_0$ is not contained in any proper affine hyperplane. We can iteratively select a set of $d+1$ points $\mathcal{S} = \{a_0, a_1, \dots, a_d\}$ satisfying two conditions: (1) $\mathcal{S}$ is affinely independent, and (2) all pairs in $\mathcal{S}$ belong to $D$. \\
    
    We construct $\mathcal{S}$ inductively. Select $a_0 \in X_0$. Let $A_0 = D_{a_0} \cap X_0$. This set has measure 1.
    For $k < d$, assume $a_0, \dots, a_k$ have been selected. Define $A_k = (\cap_{i=0}^k D_{a_i}) \cap X_0$. Since this is a finite intersection of measure 1 sets, $\mu(A_k) = 1$. Because $\mu$ has full affine support, $A_k$ is not contained in the affine span of $\{a_0, \dots, a_k\}$. Choose $a_{k+1} \in A_k$ such that it is affinely independent of the previous points. \\
    
    The resulting set $\mathcal{S} = \{a_0, \dots, a_d\}$ is affinely independent, and for all $a_i, a_j \in \mathcal{S}$, $(a_i, a_j) \in D$, meaning $\norm{h(a_i) - h(a_j)} = \norm{a_i - a_j}$. \\
    
    \textbf{Step 2: Constructing the Global Isometry.}
    By Lemma \ref{lemma:finite_extension}, the restriction of $h$ to $\mathcal{S}$ extends uniquely to a global Euclidean isometry $H: \mathbb{R}^d \to \mathbb{R}^d$, defined by $H(x) = Qx + b$. By construction, $H(a_i) = h(a_i)$ for all $i=0, \dots, d$. \\
    
    \textbf{Step 3: Verification Almost Everywhere.}
    We claim that $h(z) = H(z)$ for $\mu$-almost all $z$.
    Consider the set:
    \[ G = \bigcap_{i=0}^d D_{a_i}. \]
    Since $\mu(D_{a_i}) = 1$ for each $i$, the intersection $G$ has $\mu(G) = 1$.
    
    For any $z \in G$, we have by definition that $(z, a_i) \in D$ for all $i=0, \dots, d$. Therefore:
    \[ \norm{h(z) - h(a_i)} = \norm{z - a_i} \quad \forall i \in \{0, \dots, d\}. \]
    Substituting $h(a_i) = H(a_i)$ (from Step 2):
    \[ \norm{h(z) - H(a_i)} = \norm{z - a_i}. \]
    
    Since $H$ is a global isometry, it preserves distances everywhere. Thus:
    \[ \norm{H(z) - H(a_i)} = \norm{z - a_i}. \]
    
    Combining these equalities, we find that for any $z \in G$:
    \[ \norm{h(z) - H(a_i)} = \norm{H(z) - H(a_i)} \quad \forall i \in \{0, \dots, d\}. \]
    
    This implies that the point $h(z)$ and the point $H(z)$ are equidistant from the $d+1$ points $\{H(a_0), \dots, H(a_d)\}$. Since $H$ is an affine isomorphism, the set $\{H(a_0), \dots, H(a_d)\}$ is affinely independent.
    
    In Euclidean space $\mathbb{R}^d$, a point is uniquely determined by its distances to $d+1$ affinely independent points. Therefore, for all $z \in G$, we must have $h(z) = H(z)$. Since $\mu(G) = 1$, the proof is complete.
\end{proof}

\section{Conclusion}

We have presented a rigorous extension of the Mazur-Ulam theorem to the setting of probability spaces. By leveraging the geometric rigidity of simplexes and the properties of measure intersections, we demonstrated that measure-theoretic distance preservation implies global geometric rigidity given full-dimensional support. This result bridges the gap between probabilistic functional analysis and classical affine geometry, ensuring that maps which appear isometric to a random observer are, in fact, structurally isometric almost everywhere.


\begin{thebibliography}{9}
\bibitem{mazur1932}
S. Mazur and S. Ulam.
\textit{Sur les transformationes isom\'etriques d'espaces vectoriels norm\'es.}
C. R. Acad. Sci. Paris, 194:946--948, 1932.
\end{thebibliography}
\end{document}